\author[Raghavan]{Dilip Raghavan}
\thanks{This paper was completed when the first author was a Fields Research Fellow.
The first author thanks the Fields Institute for its kind hospitality.}
\address[Raghavan]{Department of Mathematics \\
National University of Singapore\\
Singapore 119076.}
\email{\href{dilip.raghavan@protonmail.com}{dilip.raghavan@protonmail.com}}
\urladdr{\url{https://dilip-raghavan.github.io/}}
\author[Todorcevic]{Stevo Todorcevic}
\thanks{Second author is partially supported by grants from NSERC (455916) and CNRS (IMJ-PRG UMR7586).}
\address[Todorcevic]{Department of Mathematics, University of Toronto, Toronto, Canada, M5S 2E4.}
\email{\href{stevo@math.toronto.edu}{stevo@math.toronto.edu}}
\address[Todorcevic]{Institut de Math\'{e}matique de Jussieu, UMR 7586, Case 247, 4 place Jussieu, 75252 Paris Cedex, France.}
\email{\href{todorcevic@math.jussieu.fr}{todorcevic@math.jussieu.fr}}
\address[Todorcevic]{Matemati\v{c}ki Institut, SANU, Belgrade, Serbia.}
\email{\href{stevo.todorcevic@sanu.ac.rs}{stevo.todorcevic@sanu.ac.rs}}
\date{\today}
\subjclass[2020]{03E02, 05D10, 03E55, 05C55, 54E40}
\keywords{partition calculus, Ramsey degree, strong coloring, rationals}
\title[Galvin's problem]{Galvin's problem in higher dimensions}
\def\polhk#1{\setbox0=\hbox{#1}{\ooalign{\hidewidth
    \lower1.5ex\hbox{`}\hidewidth\crcr\unhbox0}}}
\newtheorem{Theorem}{Theorem}
\newtheorem{Lemma}[Theorem]{Lemma}
\newtheorem{Cor}[Theorem]{Corollary}
\newtheorem{Question}[Theorem]{Question}
\theoremstyle{definition}
\theoremstyle{remark}
\renewcommand{\[}{\left[}
\renewcommand{\]}{\right]}
\newcommand{\PPP}{\mathcal{P}}
\newcommand{\QQ}{\mathbb{Q}}
\newcommand{\lc}{\left|}
\newcommand{\rc}{\right|}
\newcommand\ZFC{\mathrm{ZFC}}
\newcommand\PFA{\mathrm{PFA}} 
\newcommand\CH{\mathrm{CH}}
\DeclareMathOperator{\otp}{otp}
\DeclareMathOperator{\dom}{dom}
\newcommand{\Pset}{\mathcal{P}}
\newcommand{\CCC}{{\mathcal{C}}}
\newcommand{\TTT}{{\mathcal{T}}}
\newcommand{\RR}{\mathbb{R}}
\newcommand{\pr}[2]{\left\langle #1, #2 \right\rangle}
\newcommand{\seq}[4]{\left\langle {#1}_{#2}: #2 #3 #4 \right\rangle}
\newcommand{\pc}[2]{{\[#1\]}^{#2}}
\begin{document}
\begin{abstract}
 It is proved that for each natural number $n$, if $\lc \RR \rc = {\aleph}_{n}$, then there is a coloring of $\pc{\RR}{n+2}$ into ${\aleph}_{0}$ colors that takes all colors on $\pc{X}{n+2}$ whenever $X$ is any set of reals which is homeomorphic to $\QQ$.
 This generalizes a theorem of Baumgartner and sheds further light on a problem of Galvin from the 1970s.
 Our result also complements and contrasts with our earlier result saying that any coloring of $\pc{\RR}{2}$ into finitely many colors can be reduced to at most $2$ colors on the pairs of some set of reals which is homeomorphic to $\QQ$ when large cardinals exist.
\end{abstract}
\maketitle
\section{Introduction} \label{sec:intro}
Let $A$ and $B$ be structures.
For natural numbers $k, l, t \geq 1$, the notation
\begin{align*}
 B \rightarrow {\left( A \right)}^{k}_{l, t}
\end{align*}
means that for every coloring $c: {\[B\]}^{k} \rightarrow l$, there exists a substructure $C$ of $B$ such that $C$ is isomorphic to $A$ and $\lc c'' {\[C\]}^{k} \rc \leq t$.
Suppose that $\CCC$ is some class of structures and that $A$ is a structure that embeds into every member of $\CCC$.
For a natural number $k \geq 1$, the \emph{$k$-dimensional Ramsey degree of $A$ within $\CCC$} is the the smallest natural number ${t}_{k} \geq 1$ (if it exists) such that $B \rightarrow {\left( A \right)}^{k}_{l, {t}_{k}}$, for every natural number $l \geq 1$ and for every structure $B \in \CCC$.
When no such ${t}_{k}$ exists, we say that the $k$-dimensional Ramsey degree of $A$ within $\CCC$ \emph{is infinite} or \emph{does not exist.}

Suppose that ${R}_{1}, \dotsc, {R}_{m}$ are finitely many finitary relations on the structure $A$.
The relations ${R}_{1}, \dotsc, {R}_{m}$ are said to \emph{solve the expansion problem for $A$ within the class $\CCC$} if for every structure $B \in \CCC$ and every finitary relation $S$ on $B$, there exists a substructure $C$ of $B$ and an isomorphism $\varphi: A \rightarrow C$ such that the restriction of $S$ to $C$ is quantifier free definable from the images of ${R}_{1}, \dotsc, {R}_{m}$ under $\varphi$.
It turns out that solving the expansion problem for $A$ within $\CCC$ for $k$-ary relations is equivalent to finding the $k$-dimensional Ramsey degree of $A$ within $\CCC$.

The notion of $k$-dimensional Ramsey degree discussed here is related to, but is distinct from other notions of Ramsey degree occurring frequently in the literature, such as in \cite{kpt, lionel, zukerdegree}.
The latter notions focus on the situation where $B$ is a Fra{\" i}ss{\' e} limit of finite structures, $A=B$, $\CCC = \{B\}$, and instead of coloring the $k$-element subsets of $B$, one colors all the copies of some fixed finite structure within $B$.
The computation of Ramsey degrees and the accompanying solution to expansion problems plays a crucial role in topological dynamics under the guise of determining the universal minimal flows of various automorphism groups.
We refer to \cite{kpt} for further details.

The focus of our work is on determining the $k$-dimensional Ramsey degrees of the topological space of the rationals within various different classes of Hausdorff spaces.
Let ${\TTT}_{\RR}$ denote the usual topology of the real numbers, and ${\TTT}_{\QQ}$ its restriction to the rationals.
In the 1970s, Galvin conjectured that the $2$-dimensional Ramsey degree of $\pr{\QQ}{{\TTT}_{\QQ}}$ within $\left\{ \pr{\RR}{{\TTT}_{\RR}} \right\}$ is $2$.
Galvin's conjecture was brought into sharper focus by Baumgartner who proved that the $2$-dimensional Ramsey degree of $\pr{\QQ}{{\TTT}_{\QQ}}$ does not exist within any class which contains a countable Hausdorff space.
\begin{Theorem}[Baumgartner~\cite{baumtop}] \label{thm:baum}
 Suppose $\pr{X}{\TTT}$ is any Hausdorff space with $\lc X \rc = {\aleph}_{0}$.
 There is a coloring $c: \pc{X}{2} \rightarrow \omega$ such that for any subspace $R \subseteq X$ that is homeomorphic to $\QQ$, $c''\pc{R}{2} = \omega$.
\end{Theorem}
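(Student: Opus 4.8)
The plan is to exploit the one feature shared by all copies of $\QQ$: by Sierpi\'nski's characterization, a subspace $R\subseteq X$ is homeomorphic to $\QQ$ exactly when $R$ is countable, metrizable, and \emph{crowded} (has no isolated points). Thus I would first reduce the theorem to the following purely crowdedness-driven statement: construct $c:\pc{X}{2}\to\omega$ so that every crowded subspace of $X$ carrying a first countable (in particular, metrizable) subspace topology realizes every color on its pairs. The metrizability is used only to extract convergent sequences and local neighborhood bases inside $R$; the real work is forcing color-diversity out of crowdedness alone.

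Before designing $c$ it is worth recording why topology must enter. If $c$ depended only on the enumeration of $X$, i.e.\ on the indices of the two points, then one would be asking for a coloring of $\pc{\omega}{2}$ all of whose colors appear on $\pc{B}{2}$ for \emph{every} infinite $B\subseteq\omega$; but Ramsey's theorem produces an infinite $B$ homogeneous for color $0$, and that $B$ omits color $1$. Hence $c$ must be sensitive to the topology, and the guiding design principle is the contrapositive: \emph{arrange that any subset of $X$ omitting some color is scattered} (of bounded Cantor--Bendixson rank), so that no copy of $\QQ$, being crowded, can omit a color.

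Concretely, I would build $c$ from a rank/oscillation function attached to the enumeration. Fix $X=\langle x_n:n\in\omega\rangle$ together with a countable family of open sets separating the points of $X$ (available since $X$ is Hausdorff and countable). For a pair $\{x_m,x_n\}$ I would read off, from this family and the indices $m,n$, a finite \emph{depth of separation}, calibrated so that: (i) along any sequence $q_i\to p$ inside a first countable subspace the depths $c(\{p,q_i\})$ are unbounded, and, more importantly, (ii) color $k$ is tied to local configurations of Cantor--Bendixson rank $k$. The verification would then proceed by recursion on $k$: using crowdedness and first countability inside $R\cong\QQ$, together with the self-similarity of $\QQ$ (every nonempty relatively open piece of $R$ is again a copy of $\QQ$), I would locate inside $R$ a depth-$k$ nested tower of convergent sequences whose top pair is forced to receive color exactly $k$.

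The hard part will be step (ii) combined with the fact that the ambient space $X$ is only assumed Hausdorff: $X$ need not be regular, first countable, or second countable, so $c$ cannot be read off from a neighborhood base of $X$ and must instead be distilled from the countable separating family and the enumeration alone. In particular, the delicate point is to calibrate the rank function so that crowdedness produces \emph{every} finite level (and hence exactly color $k$, rather than merely some large color); this is where the interplay between the fixed enumeration, the separating family, and the local neighborhood bases that exist only inside $R$ has to be managed with care.
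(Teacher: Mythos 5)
Your write-up is a plan rather than a proof: the coloring $c$ is never actually constructed. The ``depth of separation'' that is to be ``read off'' from the indices and a countable separating family is left undefined, and you yourself flag the decisive step --- calibrating the colors so that a crowded $R$ realizes \emph{every} $k$, not merely unboundedly many values --- as still to be done. But that calibration \emph{is} the theorem. Your desideratum (i) (colors unbounded along convergent sequences) would at best give that $c''\pc{R}{2}$ is infinite, which does not yield $c''\pc{R}{2} = \omega$; the guiding principle ``any set omitting a color is scattered'' is a restatement of the goal rather than a mechanism for achieving it; and the Ramsey-theorem remark that $c$ cannot depend on the enumeration alone is a correct sanity check but makes no progress on the construction. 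So there is a genuine gap: the entire combinatorial core is missing.

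For comparison, the paper obtains this statement as the case $n=0$ of Theorem \ref{thm:main}, and the mechanism is quite concrete. Fix a 1--1 enumeration $\seq{y}{m}{<}{\omega}$ of $X$ and, for each $l$, pairwise disjoint open neighborhoods ${N}_{\{l\}}({y}_{\xi})$ of ${y}_{0}, \dotsc, {y}_{l}$ (this is where Hausdorffness is used). Define a regressive partial map $h$ on pairs by: for $l < m$, $h(\{l,m\}) = \{\xi, l\}$ if there is a (necessarily unique) $\xi < l$ with ${y}_{m} \in {N}_{\{l\}}({y}_{\xi})$, and $h(\{l,m\}) = \; \uparrow$ otherwise; since $\xi < l$, the iteration must terminate, and $c$ of a pair is defined to be the number of iterations of $h$ before reaching $\uparrow$. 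Given $R$ homeomorphic to $\QQ$, Lemma \ref{lem:famP} (via a hereditary family of index sets) produces a copy of $\QQ$ inside $R$ whose index set $M$ has order type $\omega$, and then an induction on $k$ realizes every color on $M$: color $0$ arises by choosing the second point of a pair inside ${N}_{\{l\}}({y}_{l})$ itself, and from a pair $\{{y}_{\alpha}, {y}_{l}\}$ of color $k$ (with $\alpha < l$) one gets color $k+1$ by choosing $\delta \in M$ with $\delta > l$ and ${y}_{\delta} \in {N}_{\{l\}}({y}_{\alpha})$, which forces $h(\{l, \delta\}) = \{\alpha, l\}$ and hence adds exactly one step to the walk. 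Your sketch gestures at something of this flavor (ranks attached to separating neighborhoods, recursion on $k$ using crowdedness), but without a definition of the coloring and the exact ``increment by one'' step, the argument cannot be checked or completed as written.
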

An unpublished result of Todorcevic and Weiss extended Baumgartner's result to include $\sigma$-discrete metric spaces.
\begin{Theorem}[Todorcevic and Weiss~\cite{todorcevicweissnotes}] \label{thm:todorcevicweiss}
 If $\pr{X}{d}$ is a $\sigma$-discrete metric space, then there is a coloring $c: \pc{X}{2} \rightarrow \omega$ such that $c''\pc{Y}{2} = \omega$ for all $Y \subseteq X$ homeomorphic to $\QQ.$
\end{Theorem}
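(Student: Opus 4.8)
The plan is to convert the $\sigma$-discreteness hypothesis into a single global combinatorial scaffold — a level function together with a nested sequence of partitions — and then to pull back an oscillation coloring along it. First I would fix a decomposition $X = \bigsqcup_{n \in \omega} E_n$ into pairwise disjoint, relatively discrete pieces, obtained from the witnessing discrete subspaces by the usual disjointification, and let $\ell : X \to \omega$ be the associated level function, $\ell(x) = n$ iff $x \in E_n$. By further splitting each $E_n$ according to the dyadic scale at which its points separate from the lower levels $\bigcup_{m<n} E_m$, I would arrange the decomposition to be coherent with the metric: there is a refining sequence of partitions $\mathcal{P}_0 \prec \mathcal{P}_1 \prec \cdots$ whose pieces have vanishing diameter, such that two points first land in distinct pieces at a level comparable both to their distance scale and to their levels. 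Identifying each $x$ with the branch $b_x = \langle P^x_n : n \in \omega\rangle$ of the tree $T$ of all pieces, where $P^x_n$ is the $\mathcal{P}_n$-piece containing $x$, embeds $X$ injectively (since $X$ is metric, hence Hausdorff, with $\operatorname{diam} \to 0$) into the branches of $T$, and translates $\ell$ into the splitting heights of $T$.

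The combinatorial heart of the argument is a substitute for the Baire category theorem, which genuinely fails on $\QQ$. I would prove that if $Y \subseteq X$ is homeomorphic to $\QQ$, then $\ell$ is unbounded on every nonempty relatively open $U \subseteq Y$; equivalently, for each $N$ the set $\{y \in Y : \ell(y) \le N\}$ is nowhere dense in $Y$. The key point is that a relatively discrete subset $A$ of a crowded space has nowhere dense closure: if a nonempty open $W$ were contained in $\overline{A}$, choosing $p \in W \cap A$ and a neighborhood $V$ with $V \cap A = \{p\}$ would exhibit the nonempty open crowded set $(V \cap W) \setminus \{p\} \subseteq \overline{A}$ disjoint from $A$, contradicting density of $A$ in $\overline{A}$. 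Since $\{y \in Y : \ell(y) \le N\} = \bigcup_{n \le N}(E_n \cap Y)$ is a finite union of relatively discrete sets, its closure is nowhere dense (valid in any space), and as $Y$ is crowded it cannot cover a nonempty open piece. Thus points of arbitrarily high level — equivalently, branches splitting off at arbitrarily high tree-levels — are dense inside every copy of $\QQ$.

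With the scaffold in place I would define $c : \pc{X}{2} \to \omega$ as an oscillation coloring of the associated branches. Fixing a linear order of the children of each node of $T$, for $x \ne y$ let $\Delta(x,y)$ be the splitting height of $b_x$ and $b_y$ and set $c(\{x,y\}) = \operatorname{osc}(b_x, b_y)$, the number of alternations in the comparison of $b_x$ and $b_y$ along the tree-levels in a window determined by $\Delta(x,y)$ and the levels $\ell(x), \ell(y)$. The design must be insensitive to \emph{which} high levels actually occur, since a copy of $\QQ$ controls only that its levels are unbounded on each piece, never their exact values; counting alternations of a pattern, rather than reading off numerical level differences, is precisely what provides this insensitivity.

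The final and hardest step is to show that this single, global coloring is surjective on $\pc{Y}{2}$ for every copy $Y$ of $\QQ$. Using the density lemma I would, given a target colour $k$, build inside $Y$ a finite configuration whose branches split in a prescribed pattern realizing exactly $k$ alternations: crowdedness lets one repeatedly insert, into a shrinking nest of pieces, points of ever higher level lying alternately on the two sides of the previously chosen branches, and the Baire substitute guarantees the required points always exist. The main obstacle is exactly this uniform surjectivity — producing \emph{every} colour, not merely infinitely many, simultaneously for all copies of $\QQ$ with one fixed coloring — compounded by the fact that $X$ need not be separable, so $T$ may branch uncountably. This last difficulty I would handle by observing that any fixed copy of $\QQ$, being countable, meets only a countable subtree, on which the oscillation analysis reduces to the classical surjectivity of oscillation maps on crowded families of branches.
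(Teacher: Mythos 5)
First, a caveat: the paper does not prove Theorem~\ref{thm:todorcevicweiss}; it is quoted from an unpublished manuscript, so there is no in-paper proof to compare against. The closest analogue in the paper is the proof of Theorem~\ref{thm:main} (generalizing Baumgartner's Theorem~\ref{thm:baum}), which uses a ``walk'' coloring built from a set mapping rather than an oscillation coloring. Your topological input is correct and is indeed the essential point: the observation that a relatively discrete subset of a crowded space has nowhere dense closure, hence that the level function $\ell$ is unbounded on every nonempty relatively open subset of any $Y\cong\QQ$, is exactly the right ``Baire substitute,'' and your proof of it is fine.

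The genuine gap is that the coloring is never actually defined and the surjectivity argument does not engage with the real difficulty. The color of a pair $\{x,y\}$ must be computable from the two branches $b_x,b_y$ alone; once these branches split at $\Delta(x,y)$ they live in disjoint subtrees, and ``the number of alternations in the comparison of $b_x$ and $b_y$'' is not a well-defined quantity without substantial additional structure, which you neither specify nor show can be arranged (your requirement that the splitting level be ``comparable both to their distance scale and to their levels'' is a strong coherence demand that is not justified). More importantly, to realize a prescribed color $k$ you must exhibit a single pair in $\pc{Y}{2}$ whose two branches weave exactly $k$ times; your recursion ``repeatedly insert points of ever higher level lying alternately on the two sides'' produces a configuration of many points, and no mechanism is given by which this yields one pair of each color. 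Contrast this with the walk colorings used in the paper: there, $c(\{x,y\})$ counts the length of a regressive walk, so a pair of color $k+1$ is manufactured from a pair of color $k$ by prolonging the walk one step, namely by choosing a new point of $Y$ inside one prescribed small neighborhood --- and that choice is exactly what your density lemma licenses. Your oscillation design has no analogous one-step extension property, so the ``final and hardest step'' you acknowledge is not merely unfinished; as set up, there is no identified route to it.
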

In 2018, the authors proved in~\cite{galvin-pi} that a $\sigma$-discrete metric space is the only impediment which can prevent the $2$-dimensional Ramsey degree of $\pr{\QQ}{{\TTT}_{\QQ}}$ from being equal to $2$ within a class of metric spaces.
The following summarizes the main results from \cite{galvin-pi}.
\begin{Theorem}[\cite{galvin-pi}] \label{thm:galvin}
 If there is a Woodin cardinal, then the $2$-dimensional Ramsey degree of $\pr{\QQ}{{\TTT}_{\QQ}}$ within the class of all uncountable sets of reals is $2$.
 If there is a proper class of Woodin cardinals, then the $2$-dimensional Ramsey degree of $\pr{\QQ}{{\TTT}_{\QQ}}$ within the class of all non-$\sigma$-discrete metric spaces is equal to $2$, and it is at most $2$ within the class of all regular, non-left-separated spaces with point-countable bases.
 Further, these conclusions hold under $\PFA$.
\end{Theorem}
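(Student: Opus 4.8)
The plan is to split the equality ``the degree equals $2$'' into a lower bound, that a single color never suffices, and an upper bound, that two colors can always be forced on a copy of $\QQ$. The lower bound is the soft direction. For the class of uncountable sets of reals, fix a well-ordering $\prec$ of $\RR$ and color a pair $\{x,y\}$ with $x <_{\RR} y$ by whether $\prec$ and $<_{\RR}$ agree on it. A monochromatic $C \cong \QQ$ would make $\pr{C}{<_{\RR}}$ well-ordered or reverse well-ordered, hence topologically scattered, contradicting that a copy of $\QQ$ is crowded. For the two topological classes the same idea applies once one fixes a well-ordering of the space and reads off the scrambling against the separable (metric, or point-countable-base) structure; since each class contains a copy of $\QQ$---note that countable, and in particular $\sigma$-discrete, spaces are already excluded from the metric class---the degree is at least $2$ throughout.

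For the upper bound it suffices to treat one finite coloring $c \colon \pc{B}{2} \to l$ on an uncountable set of reals $B$ and to produce $C \subseteq B$ with $C \cong \QQ$ and $\lc c'' \pc{C}{2} \rc \le 2$; the richer classes will be reduced to this prototype below. A copy of $\QQ$ certainly exists inside $B$, since the Cantor--Bendixson decomposition leaves an uncountable crowded remainder, and a countable dense-in-itself subset of a crowded separable metric space is homeomorphic to $\QQ$ by Sierpi\'{n}ski's characterization; all the difficulty is in bounding the colors. I would approach this by a forcing $\PP = \PP(B,c)$ whose conditions are finite partial copies of $\QQ$ inside $B$ on which only two of the $l$ colors have so far appeared, ordered by an extension relation with book-keeping that keeps the emerging copy crowded and without endpoints. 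The generic object is then the desired $C$, and the entire question becomes whether the relevant dense sets can be met in $V$.

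Two routes meet the dense sets. Under $\PFA$ one shows $\PP$ is proper and isolates the $\aleph_1$ dense sets whose meeting yields $C$, so a sufficiently generic filter exists outright in $V$. Under large cardinals the route is generic absoluteness: forcing with $\PP$ witnesses, in a generic extension, the statement that inside $B$ there is a copy of $\QQ$ with at most two colors, which is projective in the set-of-reals parameters $B$ and $c$ and so is a $\mathbf{\Sigma}^2_1$-level assertion. It lies beyond the reach of Shoenfield absoluteness precisely because of the second-order parameters, and a Woodin cardinal supplies the two-step generic absoluteness---through a universally Baire representation of the parameters and the determinacy it carries, after reflecting $B$ and $c$ into a countable $M \prec H(\theta)$ and collapsing---that transfers the statement from the generic extension back to $V$. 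A proper class of Woodin cardinals makes this transfer robust enough to quantify over the larger classes: each non-$\sigma$-discrete metric space, and each regular non-left-separated space with a point-countable base, is shown by a structure theorem to contain an uncountable, crowded, metrically tame subset on which the real-line analysis runs, while Theorem~\ref{thm:todorcevicweiss} shows the $\sigma$-discrete case is a genuine obstruction that these hypotheses exactly avoid.

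The genuine obstacle is the core of the forcing analysis: proving that extending a condition never forces a third color, that is, that the dense sets driving crowdedness and density of the copy really are dense. This is where a fine analysis of the oscillation of $c$ along the finite approximations is required, and where the dichotomy underlying the whole theorem---between the $\sigma$-discrete spaces, which defeat any reduction by Theorem~\ref{thm:todorcevicweiss}, and the non-$\sigma$-discrete ones, which do not---must be met head-on. Getting $\PP$ to be simultaneously nontrivially color-reducing, proper (for the $\PFA$ route), and definably parametrized (for the absoluteness route) is the delicate technical heart of the argument.
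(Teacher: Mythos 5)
A point of order first: Theorem~\ref{thm:galvin} is quoted from \cite{galvin-pi} and the present paper contains no proof of it, so there is nothing here to compare your argument against line by line; it can only be measured against what such a proof must contain. Your lower bound is correct and standard: the Sierpi{\' n}ski coloring recording whether a well-ordering and ${<}_{\RR}$ agree on a pair admits no monochromatic copy of $\QQ$, since a set on which they agree (or everywhere disagree) is well-ordered or conversely well-ordered by ${<}_{\RR}$ and hence scattered; this is exactly the $k!(k-1)!=2$ bound the paper attributes to \cite{Sierp}. The upper bound, however, is where the entire content of the theorem lives, and there your proposal either defers or misstates the essential points.

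Concretely: (i) the density of the sets driving your forcing $\PP(B,c)$ --- that a finite two-colored approximation can always be extended into a prescribed open set without introducing a third color --- \emph{is} the theorem; labelling it ``the delicate technical heart'' leaves the argument empty, and you never say which two of the $l$ colors a condition is allowed to use, a choice that must be fixed coherently before genericity can produce anything. (ii) The absoluteness transfer as you describe it does not work: the statement ``there is $Y \subseteq B$ homeomorphic to $\QQ$ with $\lc c''\pc{Y}{2} \rc \leq 2$'' is ${\Sigma}_{1}$ over $\left( H({\omega}_{1}), \in, B, c \right)$ with \emph{arbitrary} sets of reals $B$ and $c$ as predicates; such parameters have no universally Baire representations, and no quantity of Woodin cardinals yields generic absoluteness for statements of this form (already ``there exists $r \in \RR \setminus B$'' is such a statement, and a Cohen real changes its truth value when $B = \RR$). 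A correct proof must first replace $B$ and $c$ by definable surrogates, and engineering that replacement is the real work of \cite{galvin-pi}, which your sketch does not touch. (iii) Your reduction of the non-$\sigma$-discrete metric case to the real-line case via ``an uncountable, crowded, metrically tame subset'' cannot be right as stated: the theorem itself assumes a single Woodin cardinal for uncountable sets of reals but a proper class of them for non-$\sigma$-discrete metric spaces, which already signals that the metric case does not reduce outright to the real-line prototype, and indeed a non-separable, non-$\sigma$-discrete metric space need not contain an uncountable set of reals on which the ``real-line analysis'' could run.
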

Theorem \ref{thm:galvin} leads to a simple solution of the expansion problem in the case of binary relations for $\pr{\QQ}{{\TTT}_{\QQ}}$ within the class of metric spaces.
In fact for uncountable sets of real numbers, every binary relation is quantifier free definable from equality, the usual ordering of the reals, and an arbitrary well-ordering on a homeomorphic copy of $\QQ$.
The large cardinal hypothesis of Theorem \ref{thm:galvin} can be weakened to a local statement on the existence of inner models that contain Woodin cardinals and correctly compute sufficiently large fragments of the cumulative hierarchy.
We refer to \cite{galvin-pi} for further details.

The main result of this paper is that there is no direct generalization of Theorem \ref{thm:galvin} to dimensions $3$ and higher.
We will prove that if $\pr{X}{\TTT}$ is any Hausdorff space of cardinality ${\aleph}_{n}$, then there is a coloring of $\pc{X}{n+2}$ into ${\aleph}_{0}$ colors which realizes all colors on $\pc{R}{n+2}$ for any subspace $R \subseteq X$ that is homeomorphic to $\QQ$.
In other words, if a class $\CCC$ contains any Hausdorff space of size ${\aleph}_{n}$, then the $n+2$-dimensional Ramsey degree of $\pr{\QQ}{{\TTT}_{\QQ}}$ within $\CCC$ does not exist.
In particular, if $\CH$ holds, then the $3$-dimensional Ramsey degree of $\pr{\QQ}{{\TTT}_{\QQ}}$ within $\{\pr{\RR}{{\TTT}_{\RR}}\}$ does not exist, and if the $k$-dimensional Ramsey degree of $\pr{\QQ}{{\TTT}_{\QQ}}$ does exist within $\{\pr{\RR}{{\TTT}_{\RR}}\}$ for every $k$, then the continuum must be at least ${\aleph}_{\omega+1}$.
Our result here shows that Theorem \ref{thm:galvin} is sharp at least as far as the dimension is concerned.

Sierpi{\' n}ski's classical coloring from~\cite{Sierp} shows that the $k$-dimensional Ramsey degree of $\pr{\QQ}{{\TTT}_{\QQ}}$ within $\{\pr{\RR}{{\TTT}_{\RR}}\}$ is at least $k!(k-1)!$.
There is also a natural way to generalize Sierpi{\' n}ski's coloring to any metrizable space.
Details may be found in Section 3 of \cite{galvin-pi}.
In view of this and the result we prove here, the following question suggests itself.
\begin{Question} \label{Q:degree}
 What is the largest class of topological spaces within which the $k$-dimensional Ramsey degree of $\pr{\QQ}{{\TTT}_{\QQ}}$ is equal to $k!(k-1)!$?
\end{Question}
\section{Notation} \label{sec:notation}
Our set-theoretic notation is standard.
If $c$ is a function, then $\dom(c)$ denotes the domain of $c$, and if $X \subseteq \dom(c)$, then $c''X$ is the image of $X$ under $c$ -- that is, $c''X = \{c(x): x \in X\}$.
For any $A$, $\Pset(A)$ denotes the powerset of $A$.
When $\kappa$ is a cardinal, $\pc{X}{\kappa}$ is $\{A \subseteq X: \lc A \rc = \kappa\}$, and $\pc{X}{< \kappa}$ denotes $\{A \subseteq X: \lc A \rc < \kappa\}$.

If $d$ is a metric on $Y$, then ${B}_{d}(y, \epsilon)$ denotes $\{z \in Y: d(y, z) < \epsilon\}$, for all $y \in Y$ and $\epsilon \in \RR$.
A topological space $\pr{X}{\TTT}$ is \emph{dense-in-itself} if for each $x \in X$ and each open neighborhood $U$ of $x$, there exists $y \in U$ with $y \neq x$.
A theorem of Sierpi{\' n}ski (see \cite{engelking}) says that $\pr{X}{\TTT}$ is homeomorphic to $\QQ$ with its usual topology if and only if it is non-empty, countable, metrizable, and dense-in-itself.
\section{The main result} \label{sec:main}
We prove the main result in this section.
The proof is a natural generalization of Baumgartner's proof from \cite{baumtop} via the following well-known set mapping theorem of Kuratowski~\cite{kuratowski-alephs}, which is usually formulated in a slightly different way.
Much more information about set mappings in general, and Kuratowski's theorem in particular, may be found in \cite{partitionbible}.
\begin{Lemma}[Kuratowski~\cite{kuratowski-alephs}] \label{lem:kura}
 For each $n \in \omega$, there exists ${f}_{n}: \pc{{\omega}_{n}}{n+1} \rightarrow \pc{{\omega}_{n}}{< {\aleph}_{0}}$ such that:
 \begin{enumerate}
  \item
  $\forall s \in \pc{{\omega}_{n}}{n+1}\[{f}_{n}(s) \subseteq \max(s) \]$;
  \item
  $\forall t \in \pc{{\omega}_{n}}{n+2} \exists \alpha \in t\[\alpha < \max(t) \ \text{and} \ \alpha \in {f}_{n}(t \setminus \{\alpha\}) \]$.
 \end{enumerate}
\end{Lemma}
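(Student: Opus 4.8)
The plan is to prove the lemma by induction on $n$, producing the maps $f_n$ explicitly. The driving idea is a reflection that transfers the $(n+1)$-dimensional problem on $\omega_n$ down to the $n$-dimensional problem on $\omega_{n-1}$, exploiting the fact that every ordinal below $\omega_n$ has cardinality at most $\aleph_{n-1}$. For the base case $n = 0$ I would simply set $f_0(\{k\}) = k = \{0, \ldots, k-1\}$: clause (1) is immediate since $\max(\{k\}) = k$, and for clause (2), given a pair $t = \{j, k\}$ with $j < k$, the only element strictly below $\max(t) = k$ is $j$, and indeed $j \in k = f_0(t \setminus \{j\})$.

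For the inductive step I assume $f_{n-1} : \pc{\omega_{n-1}}{n} \to \pc{\omega_{n-1}}{< \aleph_{0}}$ satisfying (1) and (2). Since $\lc \gamma \rc \le \aleph_{n-1}$ for every $\gamma < \omega_n$, I fix an injection $e_\gamma : \gamma \to \omega_{n-1}$ for each such $\gamma$. Given $s \in \pc{\omega_n}{n+1}$ with $\gamma = \max(s)$, the set $s \setminus \{\gamma\}$ is contained in $\gamma$ and has $n$ elements, so $e_\gamma[s \setminus \{\gamma\}] \in \pc{\omega_{n-1}}{n}$, and I define
\begin{equation*}
  f_n(s) = e_\gamma^{-1}\left[ f_{n-1}\left( e_\gamma[s \setminus \{\gamma\}] \right) \right].
\end{equation*}
Clause (1) is then automatic, since $e_\gamma^{-1}$ takes values below $\gamma$, whence $f_n(s) \subseteq \gamma = \max(s)$; finiteness of $f_n(s)$ is inherited from $f_{n-1}$ together with the injectivity of $e_\gamma$.

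The heart of the argument is clause (2). Given $t = \{\beta_0 < \cdots < \beta_{n+1}\} \in \pc{\omega_n}{n+2}$ with $\gamma = \beta_{n+1} = \max(t)$, I look at the bottom $n+1$ elements and form $u = e_\gamma[\{\beta_0, \ldots, \beta_n\}] \in \pc{\omega_{n-1}}{n+1}$. Since $u$ has exactly $(n-1)+2$ elements, clause (2) for $f_{n-1}$ supplies some $\delta \in u$ with $\delta \in f_{n-1}(u \setminus \{\delta\})$; writing $\delta = e_\gamma(\beta_i)$ for the unique $i \le n$, I note that $\beta_i$ is not the maximum of $t$, so deleting it leaves $\gamma$ as the maximum and the \emph{same} injection $e_\gamma$ governs the computation of $f_n(t \setminus \{\beta_i\})$. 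Unwinding the definition then yields $\beta_i = e_\gamma^{-1}(\delta) \in f_n(t \setminus \{\beta_i\})$ with $\beta_i < \gamma = \max(t)$, exactly as clause (2) demands.

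I expect the main obstacle to be the bookkeeping of this reflection rather than any deep difficulty: one must check that the index $\gamma = \max(t)$ controlling the injection survives the deletion of a non-maximal element, which is precisely why clause (1) is phrased in terms of $\max$ and why clause (2) quantifies only over elements strictly below the maximum. Once the simultaneous drop in dimension ($n+1 \to n$) and in target-set size ($n+2 \to n+1$) is aligned correctly, both clauses follow from the inductive hypothesis.
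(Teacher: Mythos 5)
Your proposal is correct and follows essentially the same route as the paper: the identity map $f_0(\{k\})=k$ for the base case, and the inductive step via injections $e_\gamma:\gamma\to\omega_{n-1}$ that reflect the problem one cardinal down, with the key observation that deleting a non-maximal element of $t$ preserves $\max(t)=\gamma$ and hence the governing injection. The only difference is cosmetic indexing (you step from $n-1$ to $n$ where the paper steps from $n$ to $n+1$).
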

\begin{proof}
 The proof is by induction on $n \in \omega$.
 For $n=0$, define ${f}_{0}: \pc{\omega}{1} \rightarrow \pc{\omega}{< {\aleph}_{0}}$ by ${f}_{0}(\{m\}) = m \subseteq m = \max(\{m\})$.
 For (2), if $\{l, m\} \in \pc{\omega}{2}$ with $l < m$, then $l \in m = {f}_{0}(\{m\}) = {f}_{0}(\{l, m\} \setminus \{l\})$.
 Now assume that ${f}_{n}: \pc{{\omega}_{n}}{n+1} \rightarrow \pc{{\omega}_{n}}{< {\aleph}_{0}}$ satisfying (1) and (2) is given.
 Fix a sequence $\seq{e}{\gamma}{\in}{{\omega}_{n+1}}$ such that for each $\gamma \in {\omega}_{n+1}$ ${e}_{\gamma}: \gamma \rightarrow {\omega}_{n}$ is 1-1.
 Define ${f}_{n+1}: \pc{{\omega}_{n+1}}{n+2} \rightarrow \pc{{\omega}_{n+1}}{< {\aleph}_{0}}$ as follows.
 Given $s \in \pc{{\omega}_{n+1}}{n+2}$, let $\gamma = \max(s) \in s \subseteq {\omega}_{n+1}$.
 Then $s \setminus \{\gamma\} \subseteq \gamma$ and ${e}_{\gamma}''\left( s \setminus \{\gamma\} \right) \in \pc{{\omega}_{n}}{n+1}$.
 Thus we may define ${f}_{n+1}(s) = {e}^{-1}_{\gamma} \left( {f}_{n} \left( {e}_{\gamma}'' \left( s \setminus \{\gamma\} \right) \right) \right) \subseteq \gamma = \max(s)$.
 To verify (2), let $t \in \pc{{\omega}_{n+1}}{n+3}$ be given.
 Let $\gamma = \max(t) \in t \subseteq {\omega}_{n+1}$.
 Then $t \setminus \{\gamma\} \subseteq \gamma$ and $u = {e}_{\gamma}'' \left( t \setminus \{\gamma\} \right) \in \pc{{\omega}_{n}}{n+2}$.
 By the induction hypothesis, there exists $\alpha' \in u$ such that $\alpha' \in {f}_{n} \left( u \setminus \{\alpha'\} \right)$.
 Let $\alpha \in t \setminus \{\gamma\}$ be so that ${e}_{\gamma}(\alpha) = \alpha'$, and observe that $\alpha \in {e}^{-1}_{\gamma} \left( {f}_{n} \left( u \setminus \{\alpha'\} \right) \right)$.
 Note that $\alpha \in t$ and that $\alpha < \gamma = \max(t)$.
 Put $s = t \setminus \{\alpha\}$ and note that $\max(s) = \gamma$.
 Since $t\setminus\{\gamma\} \subseteq \gamma$, $\{\alpha\} \subseteq \gamma$, and ${e}_{\gamma}$ is a 1-1 function with domain $\gamma$, ${e}_{\gamma}''\left( s \setminus \{\gamma\} \right) = {e}_{\gamma}'' \left( \left( t \setminus \{\gamma\} \right) \setminus \{\alpha\} \right) = {e}_{\gamma}'' \left( t \setminus \{\gamma\} \right) \setminus {e}_{\gamma}''\{\alpha\} = u \setminus \{\alpha'\}$.
 Therefore by definition, ${f}_{n+1} \left( t \setminus \{\alpha\} \right) = {f}_{n+1}(s) = {e}^{-1}_{\gamma} \left( {f}_{n} \left( {e}_{\gamma}'' \left( s \setminus \{\gamma\} \right) \right) \right) = {e}^{-1}_{\gamma} \left( {f}_{n} \left( u \setminus \{\alpha'\} \right) \right) \ni \alpha$.
\end{proof}
The next lemma says that when searching within a well-ordered topological space for a homeomorphic copy of $\QQ$ on which some coloring is nicely behaved, one can concentrate only on the copies of $\QQ$ which have order type $\omega$ with respect to the well-ordering.
\begin{Lemma} \label{lem:famP}
 Let $\pr{X}{\TTT}$ be a topological space and $\delta$ an ordinal.
 Suppose that $\seq{x}{\alpha}{<}{\delta}$ is a 1--1 enumeration of all the points of $X$.
 Let $\PPP \subseteq {\[\delta\]}^{< {\aleph}_{1}}$ be a family such that:
 \begin{enumerate}
  \item
  $\PPP$ is \emph{hereditary}, that is, $\forall A \in \PPP \forall B \subseteq A \[B \in \PPP\]$;
  \item
  there exists $A \in \PPP$ such that the subspace $\{{x}_{\alpha}: \alpha \in A\}$ is homeomorphic to $\QQ$.
 \end{enumerate}
 Then
 \begin{align*}
  \omega = \min\left\{\otp(A): A \in \PPP \ \text{and the subspace} \ \{ {x}_{\alpha}: \alpha \in A \} \ \text{is homeomorphic to} \ \QQ \right\}.
 \end{align*}
\end{Lemma}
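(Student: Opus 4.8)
The plan is to prove the two halves of the equality separately. Since any subspace homeomorphic to $\QQ$ is infinite, every $A$ occurring in the displayed set is an infinite set of ordinals, so $\otp(A) \geq \omega$; as the set is nonempty by hypothesis (2), its minimum is a well-defined ordinal that is at least $\omega$. The substance is therefore to exhibit a single witness of order type exactly $\omega$. Fixing $A_{0} \in \PPP$ for which $\{x_{\alpha} : \alpha \in A_{0}\}$ is homeomorphic to $\QQ$, the heredity assumption (1) reduces everything to finding $B \subseteq A_{0}$ with $\otp(B) = \omega$ such that $\{x_{\alpha} : \alpha \in B\}$ is dense-in-itself: any such $B$ automatically lies in $\PPP$, and its subspace is nonempty, countable, metrizable and dense-in-itself, hence homeomorphic to $\QQ$ by Sierpi\'{n}ski's characterization recorded in Section~\ref{sec:notation}.

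The key point I would isolate is that a strictly increasing $\omega$-sequence of ordinals always has order type $\omega$; thus the task becomes to choose indices $\beta_{0} < \beta_{1} < \cdots$ from $A_{0}$ so that, in the resulting set, every chosen point is accumulated by chosen points of strictly larger index. I would prove the following statement by induction on the countable ordinal $\lambda = \otp(A)$: \emph{whenever $\{x_{\alpha} : \alpha \in A\}$ is homeomorphic to $\QQ$, there is $B \subseteq A$ of order type $\omega$ whose subspace is dense-in-itself.} The induction turns on a dichotomy applied to the proper initial segments $A_{\mu}$ consisting of the first $\mu$ elements of $A$, for $\mu < \lambda$.

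In the first case, some proper initial segment $A_{\mu}$ has non-scattered subspace. Its largest dense-in-itself subspace (the union of all dense-in-itself subsets, which is again dense-in-itself and, by non-scatteredness, nonempty) is then a countable dense-in-itself metrizable space, hence homeomorphic to $\QQ$, and it is indexed by a set of order type $\leq \mu < \lambda$; the induction hypothesis applied to it finishes this case. In the complementary case every proper initial segment is scattered. Here the crucial observation is that, since $\{x_{\alpha} : \alpha \in A\}$ is homeomorphic to $\QQ$ and so has no isolated points, every ball about every point is a nonempty open subspace, hence itself homeomorphic to $\QQ$ and in particular non-scattered; as scatteredness is inherited by subspaces, no such ball can sit inside any (scattered) initial segment. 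Therefore every ball about every point contains points $x_{\delta}$ with $\delta$ arbitrarily large in $\lambda$. This is exactly what makes an unobstructed fusion possible: enumerating the density requirements $(i, r)$ (``the $i$-th chosen point needs a companion within radius $r$'') and handling them one at a time, I can always find a witness inside the prescribed ball whose index exceeds the current maximum and declare it the next $\beta_{k}$. The sequence so produced is strictly increasing, so $B = \{\beta_{k} : k \in \omega\}$ has order type $\omega$, and the bookkeeping ensures every point of the subspace is a limit of others, so the subspace is dense-in-itself.

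I expect the main obstacle to be the tension between the two constraints on $B$: dense-in-itselfness pushes toward absorbing whatever points witness accumulation, while order type $\omega$ forbids any index from acquiring infinitely many predecessors. The dichotomy is the device that resolves this tension, and the one genuinely delicate verification is the claim that, once all proper initial segments are scattered, accumulation can always be secured from above by points of larger index; this is precisely where the facts that nonempty open subsets of $\QQ$ are again copies of $\QQ$ and that scatteredness passes to subspaces are used in tandem. The successor case of the induction needs no separate treatment, since deleting the top point of a copy of $\QQ$ leaves a non-scattered proper initial segment and so lands in the first case.
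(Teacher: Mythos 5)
Your proof is correct, and while it ends with essentially the same fusion as the paper (a bookkeeping enumeration of density requirements producing a strictly increasing sequence of indices whose points form a dense-in-itself, hence $\QQ$-homeomorphic, set of order type $\omega$), it secures the crucial ``accumulation from above'' property by a genuinely different mechanism. The paper fixes $A$ of \emph{minimal} order type $\xi$ among members of $\PPP$ carrying a copy of $\QQ$; heredity of $\PPP$ then forces the index set of every nonempty relatively open subset to have order type exactly $\xi$, hence to be cofinal in $A$, and the fusion yields $\xi = \omega$ at once. You instead prove, by induction on $\otp(A)$ with a scattered/non-scattered dichotomy on proper initial segments (extracting the perfect kernel and recursing in the non-scattered case, arguing cofinality of open sets directly in the all-scattered case), the purely topological statement that \emph{any} copy of $\QQ$ indexed in a countable order type contains a sub-copy indexed in order type $\omega$; the family $\PPP$ and its heredity enter only at the last moment. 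Your route is somewhat longer and leans on the standard facts that the union of dense-in-itself subsets is dense-in-itself and that scatteredness passes to subspaces, but it isolates a self-contained topological lemma and dispenses with the minimality trick; the paper's route is shorter because minimality does in one line what your induction does in two cases. Two details to nail down in a full write-up: the enumeration must issue requirement $(i,r)$ only at a stage after the $i$-th point has been chosen (the paper's condition ${n}_{k} < k-1$), and in your second case one should note that $\lambda$ is automatically a limit ordinal, so a witness of index strictly exceeding the current maximum $\mu$ is obtained from non-containment in the initial segment of length $\mu+1$.
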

\begin{proof}
 The right hand side of the equation being asserted is well-defined due to item (2) of the hypotheses.
 Let $\xi$ denote the right hand side.
 Obviously, $\omega \leq \xi < {\omega}_{1}$.
 Fix $A \in \PPP$ such that the subspace $\{ {x}_{\alpha}: \alpha \in A \}$ is homeomorphic to $\QQ$ and $\xi = \otp(A)$.
 Recall that if $Y \subseteq X$ is a subspace which is homeomorphic to $\QQ$, and if $U \subseteq Y$ is open in $Y$ and if $U \neq \emptyset$, then $U$ is homeomorphic to $\QQ$.
 Let us apply this observation to $Y = \{ {x}_{\alpha}: \alpha \in A \}$.
 Thus for every $U \subseteq Y$ which is open in $Y$ and is non-empty, $B = \{ \alpha < \delta: {x}_{\alpha} \in U \} \subseteq A$, which means $B \in \PPP$ because $\PPP$ is hereditary, and $\{ {x}_{\alpha}: \alpha \in B \} = U$ is a subspace of $X$ that is homeomorphic to $\QQ$, which means $\otp(B) = \xi$ because of the minimality of $\xi$.
 Now, fix a metric $d$ on $Y$ that is compatible with the subspace topology on $Y$.
 It follows that for any $\alpha \in A$ and non-empty open subset $U$ of $Y$, $\{ \zeta < \delta: {x}_{\zeta} \in U \} \not\subseteq A \cap \alpha$, and so if ${x}_{\alpha} \notin U$, then $\exists \alpha < \beta < \delta \[ {x}_{\beta} \in U \ \text{and} \ \beta \in A \]$.
 Let $\{ \pr{{n}_{k}}{{l}_{k}}: 2 \leq k < \omega \}$ be a 1--1 enumeration of $\omega \times \omega$ so that ${n}_{k} < k-1$, for all $k < \omega$.
 Choose ${\alpha}_{0}, {\alpha}_{1} \in A$ with ${\alpha}_{0} < {\alpha}_{1}$.
 For $2 \leq k < \omega$, suppose that ${\alpha}_{0} < \dotsb < {\alpha}_{k-1}$ belonging to $A$ are given.
 As ${n}_{k} < k-1$, ${\alpha}_{{n}_{k}} < {\alpha}_{k-1}$, and so ${x}_{{\alpha}_{{n}_{k}}} \neq {x}_{{\alpha}_{k-1}}$.
 Let $\epsilon > 0$ be such that ${B}_{d}\left({x}_{{\alpha}_{{n}_{k}}}, \epsilon\right) \subseteq {B}_{d}\left({x}_{{\alpha}_{{n}_{k}}}, \frac{1}{{l}_{k}+1}\right)$ and ${x}_{{\alpha}_{k-1}} \notin {B}_{d}\left({x}_{{\alpha}_{{n}_{k}}}, \epsilon\right)$.
 Then there exists ${\alpha}_{k-1} < {\alpha}_{k} < \delta$ such that ${x}_{{\alpha}_{k}} \in  {B}_{d}\left({x}_{{\alpha}_{{n}_{k}}}, \epsilon\right)$ and ${\alpha}_{k} \in A$.
 Now $B = \{ {\alpha}_{k}: k < \omega \} \subseteq A$ and $\otp(B) = \omega$ by construction.
 Since $\PPP$ is hereditary, $B \in \PPP$.
 Also $\{ {x}_{\alpha}: \alpha \in B \}$, being a subspace of $Y$, is countable, metrizable, non-empty, and by construction, it is dense-in-itself.
 Therefore, the subspace $\{ {x}_{\alpha}: \alpha \in B \}$ is homeomorphic to $\QQ$, and therefore $\xi = \omega$.
\end{proof}
\begin{Theorem} \label{thm:main}
 Let $n \in \omega$.
 Let $\pr{X}{\TTT}$ be any Hausdorff space with $\lc X \rc = {\aleph}_{n}$.
 There is a coloring $c: {\[X\]}^{n+2} \rightarrow \omega$ such that for any subspace $R \subseteq X$ that is homeomorphic to $\QQ$, $c'' {\[R\]}^{n+2} = \omega$.
\end{Theorem}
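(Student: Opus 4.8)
The plan is to mimic Baumgartner's argument, using the enumeration of $X$ as a well-ordering of length ${\omega}_{n}$ together with the Kuratowski function ${f}_{n}$ of Lemma \ref{lem:kura} as the combinatorial skeleton, and Lemma \ref{lem:famP} to cut the verification down to copies of $\QQ$ of order type $\omega$. First I would fix a $1$--$1$ enumeration $\seq{x}{\alpha}{<}{{\omega}_{n}}$ of $X$ and, using that $X$ is Hausdorff (hence ${T}_{1}$, so a point can be separated from any other single point by an open set), fix for each pair $\alpha \neq \eta$ an open ${U}_{\alpha, \eta} \ni {x}_{\alpha}$ with ${x}_{\eta} \notin {U}_{\alpha, \eta}$. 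The role of this family is to manufacture, out of pure Hausdorffness, a \emph{gauge} of closeness to each point: as a point approaches ${x}_{\alpha}$ it eventually enters every ${U}_{\alpha, \eta}$, so the number of $\eta$ for which it lies in ${U}_{\alpha, \eta}$ grows without bound. This step is essential, since a purely combinatorial colouring cannot possibly work: by Ramsey's theorem every colouring of $\pc{\omega}{2}$ has an infinite homogeneous-or-rainbow subset on which it omits a colour, and it is precisely density-in-itself that will prevent such thinning for copies of $\QQ$.

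Next I would define $c$ on $\pc{X}{n+2}$ by combining this gauge with ${f}_{n}$. Given $t \in \pc{X}{n+2}$ with index set $\{{\alpha}_{0} < \dotsb < {\alpha}_{n+1}\}$, Lemma \ref{lem:kura}(2) furnishes a non-maximal index $\alpha$ with $\alpha \in {f}_{n}(\{{\alpha}_{0}, \dotsc, {\alpha}_{n+1}\} \setminus \{\alpha\})$; I would select such an $\alpha = \alpha(t)$ canonically and let the colour $c(t)$ be a count of how many reference points (indexed below $\max(t)$) fail to separate ${x}_{\alpha}$ from the remaining points of $t$. In the dimension-$2$ case $n = 0$ this is literally $c(\{{x}_{\delta}, {x}_{\gamma}\}) = \lc \{\eta < \gamma : \eta \neq \delta \ \text{and} \ {x}_{\gamma} \in {U}_{\delta, \eta}\} \rc$, where $\delta < \gamma$ and $\delta$ is the Kuratowski-captured index; the higher-dimensional version reads the gauge of the captured point ${x}_{\alpha}$ against the approaching points recorded inside ${f}_{n}(t \setminus \{\alpha\})$.

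For the verification I would argue by contradiction. If some copy $R$ of $\QQ$ omitted a colour $m$, then the family $\PPP$ of all countable $B \subseteq {\omega}_{n}$ whose associated subspace $\{{x}_{\alpha} : \alpha \in B\}$ omits the colour $m$ is hereditary (omitting a colour passes to subsets, since fewer $(n+2)$-subsets arise) and contains the index set of $R$, so Lemma \ref{lem:famP} yields such a $B$ of order type exactly $\omega$. It therefore suffices to show that every order-type-$\omega$ copy $R' = \{{x}_{{\beta}_{k}} : k < \omega\}$, with ${\beta}_{0} < {\beta}_{1} < \dotsb$, realizes every colour. Here I would use that $R'$ is dense-in-itself and metrizable: fixing a compatible metric $d$ and choosing a point ${x}_{{\beta}_{j}}$ as cluster centre, only finitely many points of $R'$ carry index below ${\beta}_{j}$, so ${x}_{{\beta}_{j}}$ is a $d$-limit of points ${x}_{{\beta}_{k}}$ of arbitrarily large index $k$. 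Feeding a finely graded family of such higher-index approachers into Lemma \ref{lem:kura}(2), I would assemble an $(n+2)$-element subset of $R'$ whose Kuratowski-captured index is exactly the centre ${\beta}_{j}$ and whose gauge count equals the prescribed $m$.

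The hard part is this last step. The Kuratowski function ${f}_{n}$ and the topological gauge are defined independently of one another, so one must produce a configuration of approaching points in $R'$ that \emph{simultaneously} makes the combinatorial set-mapping select the intended centre as its captured element and makes the separation count land on exactly $m$ rather than merely tending to infinity. Controlling the count exactly — using density-in-itself to pick approachers at intermediate proximity, so that precisely $m$ of the relevant reference neighbourhoods ${U}_{{\beta}_{j}, \eta}$ capture them, while respecting the index constraints forced by the shape of ${f}_{n}$ — is where the bulk of the bookkeeping lies, and its higher-dimensional form is the substantive generalization of Baumgartner's original two-dimensional argument.
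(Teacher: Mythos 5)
Your overall architecture matches the paper's: you use the Kuratowski set mapping of Lemma \ref{lem:kura} as the combinatorial skeleton, Hausdorff separation to attach neighborhoods to the points indexed by $f_n$, Lemma \ref{lem:famP} to reduce to copies of $\QQ$ whose index set has order type $\omega$, and a contradiction via a hereditary family $\PPP$. But the coloring you propose is genuinely different from the paper's, and the difference is exactly where your argument has a gap. You define $c(t)$ as a \emph{one-shot count} of how many pre-chosen separating neighborhoods $U_{\alpha,\eta}$ of the Kuratowski-captured point ${x}_{\alpha}$ contain the approaching point, and you defer to ``bookkeeping'' the task of finding, inside an arbitrary copy $R$ of $\QQ$, an approacher lying in \emph{exactly} $m$ of these sets. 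That step is not bookkeeping; it is where the proposal fails. The sets $U_{\alpha,\eta}$ are fixed before $R$ is known, and nothing prevents their traces on $R$ from coinciding (for instance, many of the $U_{\alpha,\eta}$ could have identical intersection with $R$, so that the count on points of $R$ jumps from $0$ to a large value without ever equalling the intermediate $m$). Density-in-itself guarantees that points of $R$ enter \emph{every finite subfamily} of neighborhoods of ${x}_{\alpha}$, i.e.\ that the count can be made large, but it gives no control whatsoever on hitting a prescribed value exactly. (There is also a secondary issue: for $n\geq 1$ the set of indices $\eta$ below $\max(t)$ is infinite, and even after restricting the count to $\eta\in f_n(t\setminus\{\alpha\})$ you would need $\lc f_n(s)\rc$ to be unbounded as $s$ ranges over $(n+1)$-subsets of the order-type-$\omega$ index set $M$, which is not evident.)

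The paper avoids this by replacing the one-shot count with an \emph{iterated pointer-chasing} count. For each $s\in\pc{{\omega}_{n}}{n+1}$ it chooses \emph{pairwise disjoint} neighborhoods ${N}_{s}({y}_{\xi})$ for $\xi\in ({f}_{n}(s)\setminus s)\cup s$, and defines a map $h$ sending $t\in\pc{{\omega}_{n}}{n+2}$ to $(t\setminus\{\max(t)\})\cup\{\xi\}$ where $\xi$ is the (unique, by disjointness) element of ${f}_{n}(t\setminus\{\max(t)\})\setminus t$ whose neighborhood captures ${y}_{\max(t)}$, and to a terminal symbol if no such $\xi$ exists; since $\max$ strictly decreases along the orbit, the color can be taken to be the number of iterations before termination. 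Each color is then realized exactly by induction: a configuration of color $0$ is obtained by placing the new maximum inside ${N}_{s}({y}_{\max(s)})$ (disjointness forces immediate termination), and a configuration of color $k+1$ is obtained from one of color $k$, say $u$, by taking the $\alpha\in u$ given by Lemma \ref{lem:kura}(2), setting $s=u\setminus\{\alpha\}$, and choosing a single point of $R$ with large index inside the single open set ${N}_{s}({y}_{\alpha})$ --- which is precisely what density-in-itself always supplies. If you replace your counting coloring with this iterated one, the rest of your outline goes through; as written, the exact-realization step cannot be completed.
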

\begin{proof}
 Fix ${f}_{n}: \pc{{\omega}_{n}}{n+1} \rightarrow \pc{{\omega}_{n}}{< {\aleph}_{0}}$ satisfying (1) and (2) of Lemma \ref{lem:kura}.
 Fix also a 1--1 enumeration $\seq{y}{\alpha}{<}{{\omega}_{n}}$ of all points of $X$.
 For any $s \in \pc{{\omega}_{n}}{n+1}$, define $O(s) = {f}_{n}(s) \setminus s$.
 By (1) of Lemma \ref{lem:kura}, $O(s)$ is a finite subset of $\max(s)$.
 Since $\pr{X}{\TTT}$ is Hausdorff, it is possible to find $\left\{ {N}_{s}({y}_{\xi}): \xi \in O(s) \cup s \right\}$ such that ${N}_{s}({y}_{\xi})$ is an open neighborhood of ${y}_{\xi}$, and for all $\xi \neq \xi'$ belonging to $O(s) \cup s$, ${N}_{s}({y}_{\xi}) \cap {N}_{s}({y}_{\xi'}) = \emptyset$.
 Observe that if $t \in \pc{{\omega}_{n}}{n+2}$ and if there exists some $\xi \in O(t \setminus \{\max(t)\})$ such that ${y}_{\max(t)} \in {N}_{t \setminus \{\max(t)\}}({y}_{\xi})$, then such $\xi$ must be unique.
 
 Define a function $h: {\[{\omega}_{n}\]}^{n+2} \cup \{\uparrow\} \rightarrow {\[{\omega}_{n}\]}^{n+2} \cup \{\uparrow\}$ as follows.
 First define $h(\uparrow) = \; \uparrow$.
 Next for any $t \in \pc{{\omega}_{n}}{n+2}$, define
 \begin{align*}
  h(t) = \begin{cases}
   \left( t \setminus \{\max(t)\} \right) \cup \{\xi\} \ &\text{if} \ \xi \in O(t \setminus \{\max(t)\}) \ \text{and}\\
   &{y}_{\max(t)} \in {N}_{t \setminus \{\max(t)\}}({y}_{\xi}),\\
   \\
   &\text{if} \ \text{there is no} \ \xi \in O(t \setminus \{\max(t)\})\\
   \uparrow \ &\text{such that} \ {y}_{\max(t)} \in {N}_{t \setminus \{\max(t)\}}({y}_{\xi}).
  \end{cases}
 \end{align*}
 First, note that since $O\left( t \setminus \{\max(t)\} \right) \cap \left( t \setminus \{\max(t)\} \right) = \emptyset$, $\xi \notin \left( t \setminus \{\max(t)\} \right)$, and so $\left( t \setminus \{\max(t)\} \right) \cup \{\xi\}$ does indeed belong to $\pc{{\omega}_{n}}{n+2}$.
 Next, note that since $O\left( t \setminus \{\max(t)\} \right) \subseteq \max\left( t \setminus \{\max(t)\} \right)$, $\max\left( \left( t \setminus \{\max(t)\} \right) \cup \{\xi\} \right) < \max(t)$.
 Now for any $t \in \pc{{\omega}_{n}}{n+2} \cup \{\uparrow\}$, define ${h}^{0}(t) = t$, and ${h}^{k+1}(t) = h({h}^{k}(t))$, for all $k \in \omega$.
 As observed earlier, when ${h}^{k+1}(t) \neq \; \uparrow$, then $\max({h}^{k+1}(t)) < \max({h}^{k}(t))$.
 It follows that for some $k \in \omega$, ${h}^{k}(t) = \; \uparrow$.
 Since ${h}^{0}(t) \neq \; \uparrow$ when $t \in \pc{{\omega}_{n}}{n+2}$, we may define $c: {\[X\]}^{n+2} \rightarrow \omega$ as follows.
 For any $w \in {\[X\]}^{n+2}$ there exists a unique ${t}_{w} \in \pc{{\omega}_{n}}{n+2}$ with $w = \{{y}_{\alpha}: \alpha \in {t}_{w}\}$.
 Define
 \begin{align*}
  c(w) = \min\left( \left\{ k \in \omega: {h}^{k+1}({t}_{w}) = \; \uparrow \right\} \right).
 \end{align*}
 We will verify that $c$ has the required property.
 Towards a contradiction, assume that there exists a subspace $R \subseteq X$ that is homeomorphic to $\QQ$ and $c''{\[R\]}^{n+2} \neq \omega$.
 Let $\PPP = \left\{ M \in {\[{\omega}_{n}\]}^{< {\aleph}_{1}}: c''{\[\{{y}_{\alpha}: \alpha \in M\}\]}^{n+2} \neq \omega \right\}$.
 If $M \in \PPP$ and $L \subseteq M$, then $\pc{\{{y}_{\alpha}: \alpha \in L\}}{n+2} \subseteq \pc{\{{y}_{\alpha}: \alpha \in M\}}{n+2}$ and so $c''{\[\{{y}_{\alpha}: \alpha \in L\}\]}^{n+2} \neq \omega$.
 Therefore $\PPP$ is hereditary and by hypothesis item (2) of Lemma \ref{lem:famP} is satisfied.
 Hence by Lemma \ref{lem:famP}, there exists $M \in \PPP$ with $\otp(M) = \omega$ so that the subspace $\{{y}_{\alpha}: \alpha \in M\}$ is homeomorphic to $\QQ$.
 
 We will derive a contradiction to the choice of $M$ by showing that for each $k \in \omega$, there exists $w \in \pc{\{{y}_{\alpha}: \alpha \in M\}}{n+2}$ with $c(w) = k$.
 For $k=0$, fix any $s \in \pc{M}{n+1}$.
 By definition, ${N}_{s}({y}_{\max(s)})$ is an open neighborhood of ${y}_{\max(s)}$.
 Since the subspace $\{{y}_{\alpha}: \alpha \in M\} \subseteq X$ is homeomorphic to $\QQ$ and since ${N}_{s}({y}_{\max(s)}) \cap \{{y}_{\alpha}: \alpha \in M\}$ is a non-empty open subset of $\{{y}_{\alpha}: \alpha \in M\}$, it must be infinite.
 Hence we may choose $\delta \in M$ such that $\max(s) < \delta$ and ${y}_{\delta} \in {N}_{s}({y}_{\max(s)})$.
 Let $t = s \cup \{\delta\} \in \pc{M}{n+2}$.
 Note that $\max(t) = \delta$ and $t \setminus \{\max(t)\} = s$.
 Further, if $\xi \in O(s)$, then $\xi \neq \max(s)$ and so ${N}_{s}({y}_{\xi}) \cap {N}_{s}({y}_{\max(s)}) = \emptyset$, whence ${y}_{\delta} \notin {N}_{s}({y}_{\xi})$.
 Therefore, $h(t) = \; \uparrow$.
 It follows that $c(w) = 0$, where $w = \{ {y}_{\alpha}: \alpha \in t \}$.
 
 Proceeding by induction, suppose that $k \in \omega$ and that for some $u \in \pc{M}{n+2}$, $c(\{{y}_{\alpha}: \alpha \in u\}) = k$.
 This means that ${h}^{k+1}(u) = \; \uparrow$, but ${h}^{k}(u) \neq \; \uparrow$.
 By (2) of Lemma \ref{lem:kura}, there exists $\alpha \in u$ with $\alpha \in {f}_{n}\left( u \setminus \{ \alpha \}\right)$.
 Define $s = u \setminus \{ \alpha \} \in \pc{M}{n+1} \subseteq \pc{{\omega}_{n}}{n+1}$.
 Note that $\alpha \in {f}_{n}\left( s \right) \setminus s = O(s)$.
 By definition, ${N}_{s}({y}_{\alpha})$ is an open neighborhood of ${y}_{\alpha}$.
 Hence ${y}_{\alpha} \in {N}_{s}({y}_{\alpha}) \cap \{{y}_{\beta}: \beta \in M\}$, and so ${N}_{s}({y}_{\alpha}) \cap \{{y}_{\beta}: \beta \in M\}$ is a non-empty open subset of the subspace $\{{y}_{\beta}: \beta \in M\} \subseteq X$, which is homeomorphic to $\QQ$.
 Therefore ${N}_{s}({y}_{\alpha}) \cap \{{y}_{\beta}: \beta \in M\}$ is an infinite set.
 Choose $\delta \in M$ such that $\delta > \max(s)$ and ${y}_{\delta} \in {N}_{s}({y}_{\alpha})$.
 Define $t = s \cup \{\delta\} \in \pc{M}{n+2}$.
 Note that $\max(t) = \delta$ and that $t \setminus \{\max(t)\} = s$.
 Therefore by definition, $h(t) = s \cup \{\alpha\} = \left( u \setminus \{ \alpha \} \right) \cup \{\alpha\} = u$ because $\{ \alpha \} \subseteq u$.
 It follows that ${h}^{k+2}(t) = \; \uparrow$, but ${h}^{k+1}(t) \neq \; \uparrow$, and hence that $c(\{{y}_{\beta}: \beta \in t\}) = k+1$.
 This concludes the induction and the proof.
\end{proof}
Observe that Baumgartner's Theorem \ref{thm:baum} is simply the special case of Theorem \ref{thm:main} when $n=0$.
We will now point out a few corollaries of Theorem \ref{thm:main}.
\begin{Cor} \label{cor:1}
 Let $n \in \omega$.
 Let $\pr{X}{\TTT}$ be any Hausdorff space with $\lc X \rc = {\aleph}_{n}$.
 For each natural number $l \geq 1$, there is a coloring $d: \pc{X}{n+2} \rightarrow l$ such that for any subspace $R \subseteq X$ that is homeomorphic to $\QQ$, $d''\pc{R}{n+2} = l$.
\end{Cor}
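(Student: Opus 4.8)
The plan is to reduce Corollary \ref{cor:1} directly to Theorem \ref{thm:main} by composing the $\omega$-valued coloring with a surjection onto $l$. The key observation is that Theorem \ref{thm:main} already produces a coloring $c \colon \pc{X}{n+2} \rightarrow \omega$ that realizes \emph{every} natural number as a color on $\pc{R}{n+2}$, for any $R \subseteq X$ homeomorphic to $\QQ$. Collapsing $\omega$ down to $l$ via a surjection cannot destroy surjectivity, so the image on each copy of $\QQ$ will remain all of $l$.

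Concretely, first I would invoke Theorem \ref{thm:main} to fix a coloring $c \colon \pc{X}{n+2} \rightarrow \omega$ such that $c''\pc{R}{n+2} = \omega$ for every subspace $R \subseteq X$ homeomorphic to $\QQ$. Next I would fix any surjection $\pi \colon \omega \rightarrow l$; since $l \geq 1$, such a surjection exists (for instance $\pi(k) = \min(k, l-1)$, or $\pi(k) \equiv k \pmod{l}$). Then I would define $d \colon \pc{X}{n+2} \rightarrow l$ by $d = \pi \circ c$.

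To verify the required property, let $R \subseteq X$ be any subspace homeomorphic to $\QQ$. Then
\begin{align*}
 d''\pc{R}{n+2} = \pi''\left( c''\pc{R}{n+2} \right) = \pi''\omega = l,
\end{align*}
where the middle equality uses Theorem \ref{thm:main} and the last equality uses surjectivity of $\pi$. This establishes the corollary.

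There is no genuine obstacle here: the entire content has been absorbed into Theorem \ref{thm:main}, and the corollary is purely a matter of post-composing with a surjection. The only point that requires the hypothesis $l \geq 1$ is the existence of a surjection $\pi \colon \omega \rightarrow l$, which fails precisely when $l = 0$; this is why the statement restricts to natural numbers $l \geq 1$.
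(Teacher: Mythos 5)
Your proposal is correct and is essentially identical to the paper's own proof: the paper also post-composes the coloring $c$ from Theorem \ref{thm:main} with the surjection $k \mapsto k \bmod l$, which is one of the two surjections you suggest. Nothing further is needed.
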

\begin{proof}
 Let $c: \pc{X}{n+2} \rightarrow \omega$ be the coloring from Theorem \ref{thm:main}.
 Define $d: \pc{X}{n+2} \rightarrow l$ so that for each $w \in \pc{X}{n+2}$, $d(w) = c(w) \mod l$.
 So $d(w) \in \{0, \dotsc, l-1\}$.
 Now apply Theorem \ref{thm:main}.
\end{proof}
\begin{Cor} \label{cor:2}
 Let $n \in \omega$.
 Suppose $\CCC$ is any class of topological spaces.
 If $\CCC$ contains any Hausdorff space of cardinality at most ${\aleph}_{n}$, then the $n+2$-dimensional Ramsey degree of $\pr{\QQ}{{\TTT}_{\QQ}}$ within $\CCC$ does not exist.
\end{Cor}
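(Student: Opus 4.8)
The plan is to reduce the assertion to Theorem~\ref{thm:main} and Corollary~\ref{cor:1} by producing, for the witnessing Hausdorff space, a coloring of its $(n+2)$-element subsets that attains every color on each homeomorphic copy of $\QQ$. Recall that the $(n+2)$-dimensional Ramsey degree of $\pr{\QQ}{{\TTT}_{\QQ}}$ within $\CCC$ fails to exist precisely when, for each $t \geq 1$, there are $l \geq 1$ and a space $B \in \CCC$ with $B \not\rightarrow {\left( \QQ \right)}^{n+2}_{l,t}$. So it suffices to fix the given Hausdorff space $B \in \CCC$ with $\lc B \rc \leq {\aleph}_{n}$ and to exhibit a coloring $c': \pc{B}{n+2} \rightarrow \omega$ such that $c'' \pc{R}{n+2} = \omega$ for every subspace $R \subseteq B$ homeomorphic to $\QQ$; passing to $c' \bmod (t+1)$ exactly as in Corollary~\ref{cor:1} then yields $B \not\rightarrow {\left( \QQ \right)}^{n+2}_{t+1,t}$ for every $t \geq 1$, which is what is wanted.

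First I would dispose of the degenerate case. If $B$ contains no subspace homeomorphic to $\QQ$ (for instance if $B$ is finite), then $B \rightarrow {\left( \QQ \right)}^{n+2}_{l,t}$ already fails for all $l,t \geq 1$ for want of any copy of $\QQ$ at all, and the degree does not exist. Otherwise $B$ is infinite, so $\lc B \rc = {\aleph}_{m}$ for a unique $m \leq n$, and Theorem~\ref{thm:main} applied with $m$ in place of $n$ furnishes a coloring $c: \pc{B}{m+2} \rightarrow \omega$ with $c''\pc{R}{m+2} = \omega$ for every $R \subseteq B$ homeomorphic to $\QQ$.

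The crux is to lift $c$ from dimension $m+2$ to dimension $n+2$ while preserving the ``all colors on every copy of $\QQ$'' property; when $m = n$ there is nothing to do, so assume $m < n$. Fix a $1$--$1$ enumeration $\seq{y}{\alpha}{<}{{\omega}_{m}}$ of $B$, and for $w \in \pc{B}{n+2}$ let $c'(w) = c\left( \left\{ y_\alpha : \alpha \in w^{\ast} \right\} \right)$, where $w^{\ast} \in \pc{{\omega}_{m}}{m+2}$ consists of the $m+2$ smallest $\alpha$ with $y_\alpha \in w$ (well defined since $\lc w \rc = n+2 \geq m+2$). To see $c'$ works, fix a copy $R \subseteq B$ of $\QQ$ and a color $j \in \omega$. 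Applying Lemma~\ref{lem:famP} to the hereditary family $\PPP = \Pset\left( \left\{ \alpha < {\omega}_{m} : y_\alpha \in R \right\} \right)$ produces a subspace $R_0 \subseteq R$ that is homeomorphic to $\QQ$ and whose index set $A$ has $\otp(A) = \omega$. Since $c$ attains $j$ on $\pc{R_0}{m+2}$, choose $v \in \pc{R_0}{m+2}$ with $c(v) = j$; because $A$ has order type $\omega$, there are infinitely many elements of $A$ exceeding every index occurring in $v$, so I may adjoin $n-m$ of the corresponding points of $R_0$ to $v$ to form $w \in \pc{R_0}{n+2} \subseteq \pc{R}{n+2}$ whose $m+2$ index-least points are exactly those of $v$. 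Then $c'(w) = c(v) = j$, and as $j$ was arbitrary, $c''\pc{R}{n+2} = \omega$.

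The only genuine obstacle is this lifting step, and specifically the demand that the $m+2$ index-least points of the enlarged set $w$ still compute the color $j$: a naive extension of an arbitrary $v \in \pc{R}{m+2}$ may fail, since $v$ could already exhaust the points of $R$ lying above it in the well-ordering. The device that removes the difficulty is Lemma~\ref{lem:famP}, which supplies a sub-copy $R_0$ of $\QQ$ of order type exactly $\omega$; inside such an $R_0$ every finite set has infinitely many points of larger index, so the required extension is always available. I expect the remaining bookkeeping---the uniqueness of $m$, the trivial case, and the reduction from $\omega$ to finitely many colors via Corollary~\ref{cor:1}---to be entirely routine.
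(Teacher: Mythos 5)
Your proof is correct, and it is in fact more careful than the paper, which states Corollary~\ref{cor:2} with no proof at all, treating it as immediate from Theorem~\ref{thm:main} and Corollary~\ref{cor:1}. The issue you isolate is real: the hypothesis reads ``at most ${\aleph}_{n}$'' while Theorem~\ref{thm:main} is stated for cardinality exactly ${\aleph}_{n}$, so when $\lc B \rc = {\aleph}_{m}$ with $m < n$ one only gets a coloring of $\pc{B}{m+2}$ and must raise the dimension. Your stepping-up device --- color $w \in \pc{B}{n+2}$ by the $c$-color of its $m+2$ index-least points, and use Lemma~\ref{lem:famP} to replace a given copy $R$ of $\QQ$ by a sub-copy whose index set has order type $\omega$, so that any $v$ realizing a prescribed color can be extended upward to an $(n+2)$-set with the same $m+2$ index-least points --- is exactly right, and the degenerate case and the reduction to finitely many colors are handled correctly. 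For comparison, there are two shorter ways to bridge the same gap while keeping the dimension at $n+2$ throughout: (i) the proof of Theorem~\ref{thm:main} goes through verbatim for any infinite Hausdorff space of cardinality at most ${\aleph}_{n}$, since one may enumerate $X$ injectively by a subset $S \subseteq {\omega}_{n}$ and restrict Kuratowski's set mapping ${f}_{n}$ to $\pc{S}{n+1}$, where conditions (1) and (2) of Lemma~\ref{lem:kura} are preserved; or (ii) one may embed $B$ into $B \sqcup D$ with $D$ discrete of cardinality ${\aleph}_{n}$, apply Theorem~\ref{thm:main} there, and restrict the resulting coloring to $\pc{B}{n+2}$, since every subspace of $B$ homeomorphic to $\QQ$ remains such in $B \sqcup D$. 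Either alternative avoids the dimension-lifting argument, but your route is self-contained and equally valid.
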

Corollary \ref{cor:2} says that in $\ZFC$ the $3$-dimensional Ramsey degree of $\pr{\QQ}{{\TTT}_{\QQ}}$ within the class of all sets of reals of cardinality ${\aleph}_{1}$ does not exist.
Thus the partition relation $\pr{\RR}{{\TTT}_{\RR}} \rightarrow {\left( \pr{\QQ}{{\TTT}_{\QQ}} \right)}^{k}_{l,  k!(k-1)!}$ serves as a gauge of the size of the continuum.
An intriguing feature here is that testing for a well-behaved countable substructure of $\RR$ can reveal something about the size of the continuum.
\begin{Cor} \label{cor:3}
 If $\pr{\RR}{{\TTT}_{\RR}} \rightarrow {\left( \pr{\QQ}{{\TTT}_{\QQ}} \right)}^{3}_{l,  12}$, for all $1 \leq l < \omega$, then $\CH$ fails.
 For any $k \geq 1$, if for every $1 \leq l < \omega$, $\pr{\RR}{{\TTT}_{\RR}} \rightarrow {\left( \pr{\QQ}{{\TTT}_{\QQ}} \right)}^{k}_{l,  k!(k-1)!}$, then $\lc \RR \rc \geq {\aleph}_{k-1}$.
 If the $k$-dimensional Ramsey degree of $\pr{\QQ}{{\TTT}_{\QQ}}$ in $\left\{ \pr{\RR}{{\TTT}_{\RR}} \right\}$ exists for every natural number $k \geq 1$, then ${2}^{{\aleph}_{0}} \geq {\aleph}_{\omega+1}$.
\end{Cor}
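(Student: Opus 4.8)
The plan is to derive all three assertions from Corollary~\ref{cor:2}, using it in contrapositive form. Note first that the contrapositive of Corollary~\ref{cor:2}, specialized to the class $\CCC = \{\pr{\RR}{{\TTT}_{\RR}}\}$, reads: if the $m$-dimensional Ramsey degree of $\pr{\QQ}{{\TTT}_{\QQ}}$ within $\{\pr{\RR}{{\TTT}_{\RR}}\}$ exists, then $\pr{\RR}{{\TTT}_{\RR}}$ is not a Hausdorff space of cardinality at most ${\aleph}_{m-2}$, and hence $\lc \RR \rc \geq {\aleph}_{m-1}$ (taking $m = n+2$). I would record this observation once and invoke it in each part.

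For the first assertion I would argue by contradiction. The hypothesis $\pr{\RR}{{\TTT}_{\RR}} \rightarrow {\left( \pr{\QQ}{{\TTT}_{\QQ}} \right)}^{3}_{l, 12}$ for all $1 \leq l < \omega$ says precisely that the $3$-dimensional Ramsey degree of $\pr{\QQ}{{\TTT}_{\QQ}}$ within $\{\pr{\RR}{{\TTT}_{\RR}}\}$ is at most $12 = 3!\,(3-1)!$, and in particular that it exists. If $\CH$ held, then $\lc \RR \rc = {\aleph}_{1}$, so $\pr{\RR}{{\TTT}_{\RR}}$ would be a Hausdorff space of cardinality at most ${\aleph}_{1}$, and Corollary~\ref{cor:2} applied with $n = 1$ would force the $3$-dimensional degree not to exist, a contradiction. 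Hence $\CH$ fails.

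The second assertion is the same argument carried out in general dimension. For $k = 1$ there is nothing to prove, since $\lc \RR \rc \geq {\aleph}_{0}$ always holds. For $k \geq 2$, the hypothesis makes the $k$-dimensional degree at most $k!(k-1)!$ and hence existent; were $\lc \RR \rc \leq {\aleph}_{k-2}$, then Corollary~\ref{cor:2} applied with $n = k-2$ (so that $n+2 = k$) would make the $k$-dimensional degree fail to exist, contradicting its existence. Therefore $\lc \RR \rc \geq {\aleph}_{k-1}$.

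For the third assertion I would first run the second assertion's argument for every $k \geq 1$ simultaneously: existence of the $k$-dimensional degree for all $k$ yields $\lc \RR \rc \geq {\aleph}_{k-1}$ for every $k$, and taking the supremum over $k$ gives $\lc \RR \rc \geq {\aleph}_{\omega}$. The one extra ingredient, and the only step that does not reduce to a direct application of Corollary~\ref{cor:2}, is to upgrade ${\aleph}_{\omega}$ to ${\aleph}_{\omega+1}$: by K\"{o}nig's theorem $\cf\left( {2}^{{\aleph}_{0}} \right) > {\aleph}_{0}$, whereas $\cf\left( {\aleph}_{\omega} \right) = {\aleph}_{0}$, so ${2}^{{\aleph}_{0}} \neq {\aleph}_{\omega}$; combined with ${2}^{{\aleph}_{0}} = \lc \RR \rc \geq {\aleph}_{\omega}$ this forces ${2}^{{\aleph}_{0}} \geq {\aleph}_{\omega+1}$. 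I expect this final cofinality step to be the main point to get right, as it is the only place where Corollary~\ref{cor:2} alone does not suffice and where a genuine theorem of cardinal arithmetic is needed.
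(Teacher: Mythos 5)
Your proposal is correct and is exactly the argument the paper intends: all three claims follow from Corollary~\ref{cor:2} by contraposition (existence of the $(n+2)$-dimensional degree within $\{\pr{\RR}{{\TTT}_{\RR}}\}$ forces $\lc\RR\rc > {\aleph}_{n}$), and the final upgrade from ${\aleph}_{\omega}$ to ${\aleph}_{\omega+1}$ via K\"{o}nig's theorem on $\cf({2}^{{\aleph}_{0}})$ is the standard step the paper leaves implicit. No gaps.
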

\def\polhk#1{\setbox0=\hbox{#1}{\ooalign{\hidewidth
  \lower1.5ex\hbox{`}\hidewidth\crcr\unhbox0}}}
\providecommand{\bysame}{\leavevmode\hbox to3em{\hrulefill}\thinspace}
\providecommand{\MR}{\relax\ifhmode\unskip\space\fi MR }
\providecommand{\MRhref}[2]{%
  \href{http://www.ams.org/mathscinet-getitem?mr=#1}{#2}
}
\providecommand{\href}[2]{#2}


\begin{thebibliography}{10}

\bibitem{baumtop}
J.~E. Baumgartner, \emph{Partition relations for countable topological spaces},
  J. Combin. Theory Ser. A \textbf{43} (1986), no.~2, 178--195.

\bibitem{engelking}
R.~Engelking, \emph{General topology}, second ed., Sigma Series in Pure
  Mathematics, vol.~6, Heldermann Verlag, Berlin, 1989, Translated from the
  Polish by the author.

\bibitem{partitionbible}
P.~Erd{\H{o}}s, A.~Hajnal, A.~M{\'a}t{\'e}, and R.~Rado, \emph{Combinatorial
  set theory: partition relations for cardinals}, Studies in Logic and the
  Foundations of Mathematics, vol. 106, North-Holland Publishing Co.,
  Amsterdam, 1984.

\bibitem{kpt}
A.~S. Kechris, V.~G. Pestov, and S.~Todorcevic, \emph{Fra\"\i ss\'e limits,
  {R}amsey theory, and topological dynamics of automorphism groups}, Geom.
  Funct. Anal. \textbf{15} (2005), no.~1, 106--189.

\bibitem{kuratowski-alephs}
K.~Kuratowski, \emph{Sur une caract\'{e}risation des alephs}, Fund. Math.
  \textbf{38} (1951), 14--17.

\bibitem{lionel}
L.~Nguyen Van~Th\'{e}, \emph{Structural {R}amsey theory of metric spaces and
  topological dynamics of isometry groups}, Mem. Amer. Math. Soc. \textbf{206}
  (2010), no.~968, x+140.

\bibitem{galvin-pi}
D.~Raghavan and S.~Todorcevic, \emph{Proof of a conjecture of {G}alvin}, Forum
  Math. Pi \textbf{8} (2020), e15, 23 pp.

\bibitem{Sierp}
W.~Sierpi{\' n}ski, \emph{Sur une probl\`{e}me de la th\'{e}orie des
  relations}, Ann. Scuola Norm. Super. Pisa, Ser. 2 \textbf{2} (1933),
  239--242.

\bibitem{todorcevicweissnotes}
S.~Todorcevic and W.~Weiss, \emph{Partitioning metric spaces}, Manuscript
  (September, 1995), 1--9.

\bibitem{zukerdegree}
A.~Zucker, \emph{Big {R}amsey degrees and topological dynamics}, Groups Geom.
  Dyn. \textbf{13} (2019), no.~1, 235--276.

\end{thebibliography}
\end{document}